\def\R{\mathbb{R}}
\def\Z{\mathbb{Z}}
\def\rR{\mathcal{R}}
\def\I{\mathbb{I}}
\def\cI{\mathcal{I}}
\def\K{\mathcal{K}}
\def\J{\mathrm{J}}
\def\a{\alpha}
\newcommand{\eps}{\varepsilon}
\newtheorem{theorem}{Theorem}
\newtheorem{prop}[theorem]{Proposition}
\newtheorem{definition}[theorem]{Definition}
\author{Guillaume Bal}
\address{Departments of Statistics and Mathematics and CCAM, University of Chicago, Chicago, IL 60637}
\email{guillaumebal@uchicago.edu}
\author{Fatma Terzioglu}
\address{Department of Statistics and CCAM, University of Chicago, Chicago, IL 60637}
\email{fterzioglu@uchicago.edu}
\date{}
\title[Uniqueness in multi-energy CT]{Uniqueness criteria in multi-energy CT}
\subjclass[2010]{65J22, 15B48, 47J06, 15B48, 92C55.}
 \keywords{multi-energy CT, dual-energy CT, spectral CT, invertibility, global, uniqueness, injective}
\begin{document}
\maketitle

\begin{abstract}
  Multi-Energy Computed Tomography (ME-CT) is a medical imaging modality aiming to reconstruct the spatial density of materials from the attenuation properties of probing x-rays. For each line in two- or three-dimensional space, ME-CT measurements may be written as a nonlinear mapping from the integrals of the unknown densities of a finite number of materials along said line to an equal or larger number of energy-weighted integrals corresponding to different x-ray source energy spectra. ME-CT reconstructions may thus be decomposed as a two-step process: (i) reconstruct line integrals of the material densities from the available energy measurements; and (ii) reconstruct densities from their line integrals.

Step (ii) is the standard linear x-ray CT problem whose invertibility is well-known, so this paper focuses on step (i). We show that ME-CT admits stable, global inversion provided that (a well-chosen linear transform of) the differential of the transform in step (i) satisfies appropriate orientation constraints that makes it a $P-$matrix. We introduce a notion of quantitative $P-$ function that allows us to derive global stability results for ME-CT in the determined as well as over-determined (with more source energy spectra than the number of materials) cases. Numerical simulations based on standard material properties in imaging applications (of bone, water, contrast agents) and well accepted models of source energy spectra show that ME-CT is often (always in our simulations) either (i) non-globally injective because it is non-injective locally (differential not of full rank), or (ii) globally injective as soon as it is locally injective (differentials satisfy our proposed constraints).
\end{abstract}
\section{Introduction}
X-ray Computed Tomography (CT) is a well-known technique for visualizing the interior structure of an object of interest in a non-invasive manner. Measurement process involves irradiating the object cross-sectionally by x-ray beams which undergo photoelectric absorption at a degree depending on the material properties of the object, called the attenuation coefficient. This results in intensity loss in the x-ray beam which is recorded by a detector and processed by a computer to produce a two-dimensional image of x-ray attenuation map in each cross-section. A three-dimensional image of the object's internal structure may then be obtained by, for instance, combining the two-dimensional images of a series of parallel cross-sections acquired in multiple views; see, e.g., \cite{Barrett, Buzug, Herman}.

Without simplifying assumptions, the quantitative problem of image reconstruction in CT is a nonlinear inverse problem, with no known analytical solution. The standard forward model used in most CT applications employs Beer's Law by neglecting scattering (which we also do here) and assumes that the x-rays are monochromatic, i.e., have a fixed energy (which we do not want to do here). The image reconstruction then reduces to a linear inverse problem, which involves the recovery of the attenuation coefficient from its integrals along lines. However, in practice, x-ray beams are quite polychromatic (see left panel in fig.\ref{fig:spectrum_attenuation}), and the linear attenuation coefficient depends not only on the chemical composition of the object but also on the energy spectrum of the x-ray photons; see right panel in fig.\ref{fig:spectrum_attenuation}. Although the use of the linear approximation works well in general, for instance to determine the location of jumps in attenuation profiles, it is more qualitative than quantitative. Moreover, serious reconstruction errors may arise when the imaged object contains materials whose attenuation coefficients vary greatly with the energy level. These are the materials with high atomic numbers such as iodine, bone or metal  \cite{AlvarezMacovski, Lionheart, Katsura, Park}.

Dual-energy CT imaging leverages the energy dependence of attenuation to obtain material-specific information, as first proposed in the 1970s by Hounsfield \cite{Hounsfield}. It involves collecting measurements using two different x-ray energy spectra to identify two different materials in the imaged object based on their material density or atomic numbers \cite{AlvarezMacovski}. The advancement of photon counting detectors, which are capable of both counting the number of incoming photons and measuring photon energy, and thus collecting measurements simultaneously in more than two energy windows, made it feasible to differentiate three or more materials. This imaging modality is referred to as multi-energy CT (ME-CT), or spectral CT, imaging \cite{Leng, Schlomka, Taguchi, Willemink}. Advantages of dual- and multi-energy CT over standard CT are better tissue contrast and improved image quality at comparable or even reduced radiation doses by reducing beam hardening and metal artifacts. Current and emerging clinical applications of dual- and multi-energy CT include tissue characterization, lesion detection, oncologic imaging, vascular imaging and lately abdominal and musculoskeletal imaging. Existing reconstruction methods in dual- and multi-energy CT typically fall in to three categories: image-based  (e.g. \cite{Brooks, Maass}), projection-based (e.g. \cite{Abascal, Wu}) and one-step (e.g. \cite{Barber, Kazantsev, Long, Mechlem, Toivanen}) approaches (see also \cite{Mory, Vilches}). More information on dual- and multi-energy CT can be found, for example, in \cite{McCollough, Grajo} and their references.

Although material reconstructions in multi-energy CT are currently a very active research area, the analysis of their uniqueness and stability properties remains challenging.  Recent such analyzes are proposed in \cite{Alvarez2019,Levine}. The first reference  comes up with sufficient conditions beyond the non-vanishing of the Jacobian determinant of the dual-energy CT transform to guarantee uniqueness while the second one presents cases of non-uniqueness of the dual-energy CT transform. We are not aware of injectivity analyzes for general ME-CT problems. 

This paper proposes sufficient local criteria on the differential of the transform that guarantee a quantitative global injectivity of ME-CT. Such criteria obviously include the non-vanishing of the determinant of such a differential (the Jacobian determinant) to guarantee that the problem is locally injective. It is in fact not too difficult to come up with examples of ME-CT that are not injective locally.  It is also known that local injectivity does not imply global injectivity. Based on the work of \cite{GaleNikaido}, we show that local injectivity plus appropriate orientation constraints on the differential guarantee (quantitative) global injectivity. These constraints on the differential have to be verified numerically. A complete characterization even of when the Jacobian determinant remains globally positive still remains out of reach to-date.

The forward model of ME-CT is described in section 2. We then show in Theorem 1 of section 3 that the dual-energy CT transform is globally injective on a rectangle provided that the Jacobian determinant is nonvanishing everywhere. We then present sufficient criteria for global injectivity of more general ME-CT transforms in Theorem 2 using the theory of $P-$functions developed in \cite{GaleNikaido}. Extending the latter work, we obtain in the same section quantitative estimates of injectivity in the determined as well as the redundant measurement settings. 

Section 4 presents the results of numerical experiments for dual- and multi energy CT transform with two, three, and four commonly used materials and the corresponding number of energy measurements. These numerical experiments provide examples where the Jacobian determinant may vanish and change signs.  In all the examples we considered where the Jacobian determinant remains positive throughout the domain, we obtained numerically that the local criteria we proposed were always satisfied. In contrast, the positive (quasi-)definiteness or the diagonal dominance of the differential, which are also known to be (more restrictive) sufficient criteria for global injectivity, were often not satisfied.

\section{The Forward multi-energy CT Model}
Let $\Omega \in \R^N$ for $N=2,3$ denote the spatial volume of the imaged object whose material composition we want to identify. Following a standard approach \cite{AlvarezMacovski}, we assume that the linear attenuation coefficient $\mu(y,E)$ of the object at a point $y \in \Omega$ and at energy $E$ can be decomposed into a linear combination of functions of energy and functions of space such that
\[
\mu(E,y) = \sum_{j=1}^m M_j(E)\rho_j(y).
\]
Here, $m$ is the number of different materials, $M_j(E)$ the energy-dependent mass attenuation, also called basis function, of the $j$-th material, which is a known quantity (see right panel in fig. \ref{fig:spectrum_attenuation}), and $\rho_j(y)$ is the spatially-dependent mass density of the $j$-th material we want to recover. We define $M(E)=(M_j(E))_{1\leq j\leq m}$ and $x(l)=(x_j(l))_{1 \leq j \leq m}$ where $x_j(l)= \textstyle \int_l \rho_j dl$ denotes the x-ray transform of $\rho_j$ along a line $l$. 

For $1 \leq i \leq n$, let $S_i(E)$ denote the (known) product of the x-ray source energy spectrum and the detector response function for the $i$-th energy spectrum; see left panel in fig. \ref{fig:spectrum_attenuation}. We assume that the source/detector models $S_i$ are normalized so that $\textstyle \int_0^\infty S_i(E) dE=1$.

We consider measurements of the form
\begin{align}
   \int_0^\infty S_i(E) e^{-\int_l \mu(y,E) dy} dE = \int_0^\infty S_i(E) e^{-M(E)\cdot x(l)} dE,\qquad 1 \leq i\leq n.
\end{align}

We assume that $M_j(E)\geq 0$ and $x_j(l)\geq0$ for all $1\leq j \leq m,$ so the above physical measurements are between $0$ and $1$.

Then, the transform $I : \rR \subset \R^m \to \R^n$ modeling the second step of ME-CT measurements is defined as
\begin{align}
   I(x) = (I_i(x))_{1\leq i\leq n}, \qquad I_i(x) = -\ln  \int_0^\infty S_i(E) e^{-M(E)\cdot x} dE \geq0.
\end{align}
We assume here that the line integrals of interest $x=x(l)\in \rR \subset \R^m$.

Multi-energy CT measurements may thus be seen as the composition of the x-ray transform, which is linear and is well-studied, and a nonlinear map that performs different weighted averaging of x-ray projections over the energy range. Therefore, the reconstruction process typically consists of two steps: first, a nonlinear material decomposition reconstructing $x=x(l)$ from $I(x)$ for each line $l$; and second a linear tomographic reconstruction for each material density from its line integrals. This paper focuses on the first step. 
\begin{figure}[t]
\begin{center}
   \includegraphics[width=\textwidth]{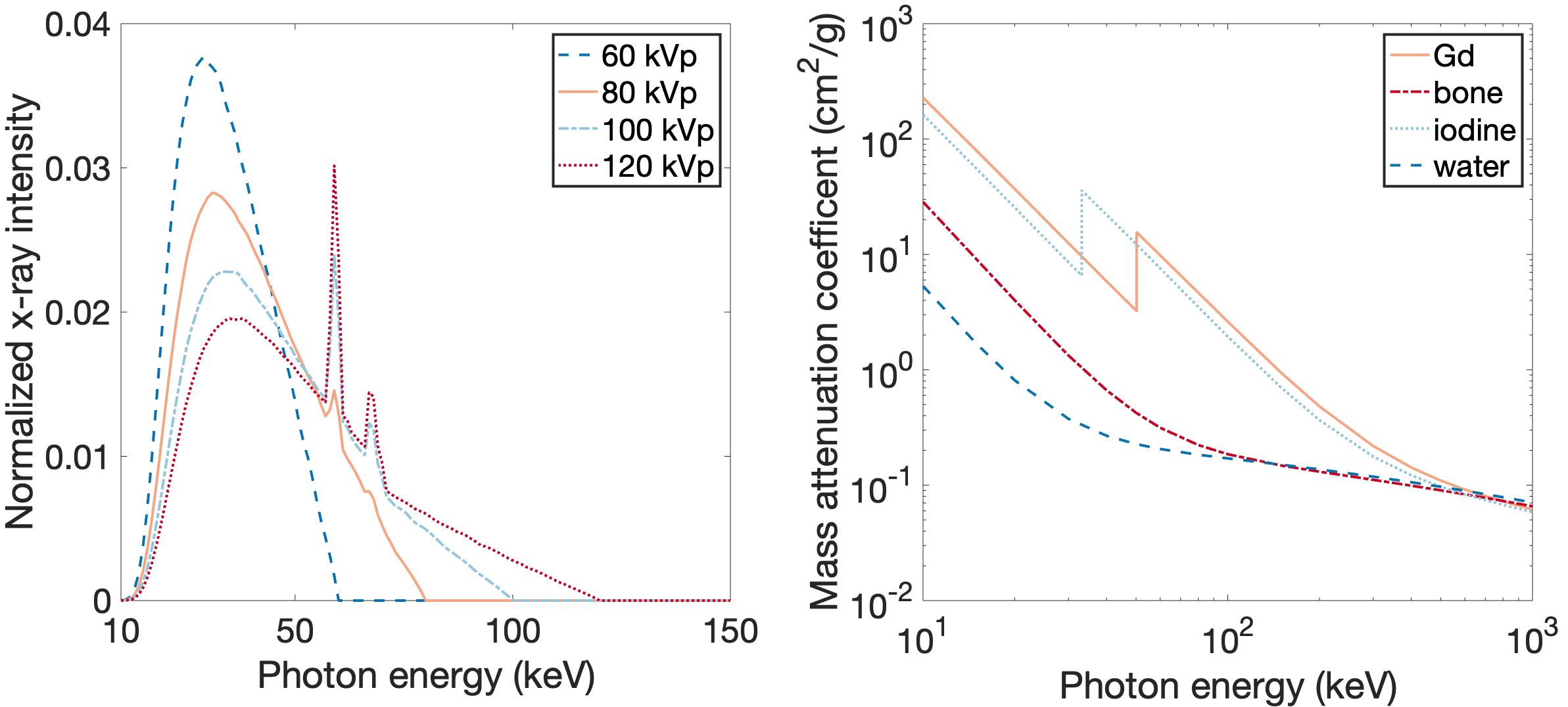}
    \caption{Left: Examples of x-ray source spectrum for varying tube potentials computed using the publicly available code SPEKTR 3.0 \cite{Spektr3}, and then normalized. Right: The x-ray attenuation coefficients of gadolinium, bone, iodine and water as functions of x-ray energy in log-log scale. The raw data was obtained from NIST \cite{NIST}. }
    \label{fig:spectrum_attenuation}
\end{center}
\end{figure}
For the rest of the paper, we are thus interested in the injectivity of the mapping $x  \in \rR \mapsto I(x) \in \R^n$ where $\rR \subset \R^m$, for technical reasons, is chosen as a closed rectangle (a Cartesian product of closed intervals). The map $I$ is smooth for $S_i$ compactly supported and $M_j$ bounded, which we now assume, and its Jacobian at a point $x \in \rR$ is given explicitly by the matrix $J(x)$ with coefficients
\begin{align}
  J_{ij}(x) = e^{I_i(x)} \int_0^\infty S_i(E) M_j(E) e^{-M(E)\cdot x} dE ,\qquad 1 \leq i \leq n, \;1 \leq j \leq m.
\end{align}
Clearly, all entries of the Jacobian matrix are strictly positive. Thus, the map $I$ is strictly isotone, that is, for any $x, a \in \Omega$,  $I(x) > I(a)$  whenever $x > a$ \cite{Rheinboldt}. In the following, both the matrix $J$ and (if applicable) its determinant will be referred to simply as the Jacobian when the difference is clear from the context. The notation $x> 0$ means that all coordinates of the vector $x$ are positive and $x>a$ means that $x-a>0$.  Similarly, the inequality $A> 0$ means that all elements of the matrix $A$ are positive. We will use the symbol $\I$ to denote the $n \times n$ identity matrix.
 
\section{Injectivity of multi-energy CT Transform}
We are interested in (sufficient) criteria that guarantee the injectivity of the map $x \mapsto I(x)$. We first consider the case $m=n$. A necessary condition for local injectivity is that det $J(x) \neq 0$ (inverse function theorem). However, the non-vanishing of the Jacobian is clearly not sufficient in general (although it is for the specific application of dual energy CT with $n=2$ as we show later in this section).
 
 The Hadamard global inverse function theorem \cite{Hadamard} states that a differentiable map $F:\R^n \to \R^n$ with a nonvanishing Jacobian is a diffeomorphism if and only if $F$ is proper, that is in this context, $\lim_{|x| \to \infty}|F(x)| = \infty$. This result, which is topological in nature, does not provide any quantitative estimates of injectivity. 

When the domain of $F$ is a rectangular region $\rR \subset \R^n$, a sufficient criterion for global injectivity based on the notion of $P$-functions and due to Gale and Nikaido \cite{GaleNikaido} reads as follows:\\

\mbox{%
    \parbox{0.9\textwidth}{%
\emph{Let $F : \rR \subset \R^n \to \R^n$ be differentiable on the closed rectangle $\rR$. If the Jacobian $J(x)$ of $F$ is a $P-$matrix for each $x \in \rR$, then $F$ univalent (injective) in $\rR$.}
    }%
    }\\
    
A matrix $A$ is called a $P-$matrix if all principal minors of $A$ are positive. Principal minors of a $n\times n$ matrix $A$ are defined as follows. Let $K$ and $L$ be subsets of $\{1,...,n\}$ with $k$ elements. The minor of $A$ associated to $K$ and $L$, denoted by $[A]_{K,L}$, is the determinant of the $k \times k$ submatrix of $A$ formed by deleting all the rows with index in $K$ and columns with index in $L$. If $K=L$, then $[A]_{K,L}=[A]_K$ is called a principal minor. A function whose differential is a $P-$matrix is called a $P-$function (see below for an equivalent definition).

The positivity of all principal minors is intimately related to the preservation of orientation. The following related geometric characterization for $P-$matrices will be useful in the sequel: \cite{FiedlerPtak, GaleNikaido}\\

\mbox{%
    \parbox{0.9\textwidth}{%
\emph{$A$ is a $P-$matrix if and only if $A$ reverses the sign of no vector except zero, that is to every nonzero vector $x$ there exists an index $i$ such that $x_i(Ax)_i> 0$}.
    }%
    }\\
    
In fact, when the map $F$ is continuously differentiable on $\rR$, it is sufficient that the Jacobian be positive everywhere and a $P-$matrix only at the boundary $\partial \rR$. This result, which combines the orientation preservation at the domain's boundary and a topological argument similar to that leading to the Hadamard univalence theorem, was proven independently (in slightly different forms) by Mas-Colell \cite{MasColell}, and Garcia and Zangwill \cite{Garcia}. We refer the reader to intuitive examples in \cite{MasColell} showing why the orientation preservation is only sufficient for injectivity, but not necessary. The main result of this paper is to apply a modified version of the orientation preservation results of \cite{GaleNikaido} to numerically prove that ME-CT is injective in many cases of practical interest. 
	
Another sufficient criterion, also in \cite{GaleNikaido}, states that if the Jacobian matrix is positive (negative) quasi-definite\footnote{A is said to be positive (negative) quasi-definite, if its symmetric part, namely  $(A + A^\top)/2$ is positive (negative) definite.}, then univalence holds not only on rectangular but on any convex region.
Positive quasi-definite matrices as well as strictly diagonally dominant matrices \footnote{A matrix $A=[a_{ij}]_{i,j=1}^n$ is strictly diagonally dominant if $|a_{ii}| > \sum_{j\neq i} |a_{ij}|$ for each $i=1,...,n$.} having positive diagonal entries are subclasses of P-matrices \cite{GaleNikaido}. However, from an algorithmic point of view, the latter are significantly better than orientation-preserving: If the Jacobian matrix is positive quasi-definite or strictly diagonally dominant everywhere, then iterative algorithms such as Gauss-Seidel provably converge to the global inverse \cite{More,Frommer}. In the case of $P-$matrix Jacobians, no algorithm in the literature is guaranteed to converge to the global inverse. Our numerical experience with ME-CT is that the differentials are $P-$matrices that are neither positive quasi-definite nor diagonally dominant. This provides the usefulness of the notion of $P-$functions in ME-CT.

Here, we prove that the nonvanishing of the Jacobian determinant is not only necessary for the injectivity of the dual-energy CT transform (the case $n=2$) but is also sufficient.
\begin{theorem}[{dual-energy CT-injectivity}]\label{dual-energy CT-injectivity}
Let $\rR \subset \R_+^2$ be a rectangular domain. The dual-energy CT transform  $I: \rR \to \R^2$ is injective if its Jacobian never vanishes in $\rR$.
\end{theorem}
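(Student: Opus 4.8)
The plan is to strengthen the hypothesis into the $P$-matrix condition required by the Gale--Nikaido criterion quoted above, and then invoke that criterion directly. The starting point is a remark specific to $2\times 2$ matrices: the first-order principal minors of a $2\times2$ matrix are exactly its two diagonal entries, so a matrix all of whose entries are strictly positive is a $P$-matrix if and only if its determinant is positive. Since every entry of $J(x)$ is strictly positive on $\rR$, it therefore suffices to show that $\det J(x)>0$ for all $x\in\rR$ — that is, to upgrade the assumed nonvanishing of $\det J$ to strict positivity.

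This upgrade uses only continuity and a relabeling. The rectangle $\rR$ is a product of intervals, hence connected, and $x\mapsto\det J(x)$ is continuous on $\rR$; being nowhere zero, it has constant sign on $\rR$. If that sign is positive, then $J(x)$ is a $P$-matrix at every point and the Gale--Nikaido theorem (applicable precisely because $\rR$ is a closed rectangle) gives injectivity of $I$. If the sign is negative, I would instead apply the theorem to $\tilde I:=(I_2,I_1)$, equivalently to the transform obtained by interchanging the two source/detector models $S_1\leftrightarrow S_2$. This $\tilde I$ is the composition of $I$ with a linear bijection of $\R^2$, hence injective if and only if $I$ is; its Jacobian is $J$ with its two rows swapped, whose entries are all still strictly positive and whose determinant is $-\det J>0$, so $\tilde I$ has a $P$-matrix Jacobian everywhere and Gale--Nikaido applies to it.

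I do not anticipate a genuine obstacle; the only point that needs care is that the nonvanishing of the Jacobian determinant is strictly weaker than the $P$-matrix property in general, and what bridges the gap here is the combination of the all-positive-entries structure of the ME-CT Jacobian with sign-constancy on the connected domain. That bridge is available only because $n=2$: for $m=n\ge 3$ a positive $n\times n$ determinant does not force the intermediate principal minors to be positive, which is exactly why the subsequent theorems impose the orientation ($P$-matrix) constraints as hypotheses rather than deducing them from $\det J\neq 0$.
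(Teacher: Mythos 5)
Your proposal is correct and follows essentially the same route as the paper's proof: observing that with all entries of $J$ positive the $P$-matrix property reduces to $\det J>0$, handling the negative-determinant case by swapping the two rows (components of $I$), and then invoking the Gale--Nikaido theorem. The only difference is that you spell out the sign-constancy of $\det J$ via continuity and connectedness of $\rR$, a detail the paper leaves implicit.
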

\begin{proof}
 It is not true in general that functions from $\R^2$ to $\R^2$ with everywhere positive Jacobian are necessarily injective. A counter-example is given in \cite{GaleNikaido}. However, we are here in a setting where all entries of the Jacobian $J(x)$ are positive, while the principal minors of $J$ are the diagonal entries $J_{11}$ and $J_{22}$, and det $J$. Hence, if  det $J$ is positive everywhere in $\rR$, then J is a $P-$matrix. When the det $J$ is negative throughout $\rR$, exchanging the two rows in $I(x)$, which is an invertible transformation, leads to a sign change in det $J$. We then apply the above Gale-Nikaido theorem to obtain the result. 
\end{proof}

In the case of multi-energy CT, a sufficient condition for global injectivity, which is a direct consequence of the Gale-Nikaido theorem \cite{GaleNikaido}, is as follows.
\begin{theorem}[{multi-energy CT-injectivity}]\label{multi-energy CT-injectivity}
Let $x \mapsto I(x)$ on a closed rectangle $\rR \subset \R_+^n$. If the Jacobian $J(x)$ is a $P-$matrix for all $x \in \rR$, then $I$ is injective (univalent) in $\rR$.
\end{theorem}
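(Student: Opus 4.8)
The plan is to deduce the statement directly from the Gale--Nikaido univalence theorem quoted above, whose hypotheses are met essentially verbatim. First I would recall from Section~2 that, under the standing assumptions that each $S_i$ is compactly supported and each $M_j$ is bounded, the map $x \mapsto I(x)$ is smooth on $\rR$; in particular it is differentiable on the \emph{closed} rectangle $\rR \subset \R_+^n$, and its differential at any $x$ is exactly the $n \times n$ matrix $J(x)$ with entries $J_{ij}(x) = e^{I_i(x)} \int_0^\infty S_i(E) M_j(E) e^{-M(E)\cdot x}\,dE$. Since both the domain and target here live in $\R^n$ (implicitly $m=n$), the Jacobian is square and the $P$-matrix condition is meaningful.

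Next, the hypothesis of the theorem is precisely that $J(x)$ is a $P$-matrix for every $x \in \rR$. Thus $F = I$ satisfies exactly the two requirements of the Gale--Nikaido statement: differentiability on a closed rectangle, and a $P$-matrix differential at every point of that rectangle. Applying that theorem yields that $I$ is univalent (injective) on $\rR$, which is the claim.

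For completeness one could instead invoke the sharper criterion of Mas-Colell and of Garcia--Zangwill, which only requires $J(x) > 0$ throughout $\rR$ (here automatic, since all entries of $J(x)$ are strictly positive) together with $J(x)$ being a $P$-matrix merely on the boundary $\partial \rR$; under the present hypothesis this holds on all of $\rR$, so the boundary-only version is not needed. Alternatively, if a self-contained argument were desired, one would reprove Gale--Nikaido by induction on $n$: the $P$-matrix property forces $\det J(x) > 0$, hence local injectivity by the inverse function theorem, and the sign-reversal characterization of $P$-matrices (also quoted above) is then used to preclude two distinct points whose $I$-values differ only in a proper subset of coordinates, reducing the obstruction to a lower-dimensional face of $\rR$ and closing the induction.

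The honest remark is that there is essentially no obstacle here: all the mathematical content sits in the Gale--Nikaido theorem, which we may use as a black box. The only points requiring (minor) care are bookkeeping ones --- confirming that $I$ is $C^1$ up to $\partial \rR$ so that the $P$-matrix hypothesis can legitimately be imposed on the closed rectangle, and recording that the implicit assumption $m = n$ is what makes the hypothesis well posed. The genuinely substantive work of the paper lies not in this corollary of Gale--Nikaido but in the subsequent quantitative refinements and in the numerical verification that the $P$-matrix condition actually holds for realistic source spectra $S_i$ and basis functions $M_j$.
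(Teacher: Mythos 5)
Your proposal is correct and follows the same route as the paper: note that $I$ is differentiable on the closed rectangle $\rR$ and apply the Gale--Nikaido univalence theorem directly, with the $P$-matrix hypothesis on $J(x)$ being assumed rather than verified. The additional remarks (the Mas-Colell/Garcia--Zangwill boundary-only variant and the sketched induction) are optional extras beyond what the paper's one-line proof records.
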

\begin{proof} 
The proof is clear as the mapping $x \mapsto I(x)$ is differentiable as required in the Gale-Nikaido theorem \cite{GaleNikaido}.
\end{proof}

\subsection{\textbf{Transforming $\bm{I}$ linearly into a $P-$function}} 
Since having a $P-$matrix Jacobian is only a sufficient criterion for injectivity, the map $I$ can still be injective even though its Jacobian is not a $P-$matrix. In fact, in our numerical experiments, the multi-energy CT transform proved to be injective as soon as its Jacobian never vanished in the rectangle $\rR$. However, what is a $P-$matrix is not the differential of $I$ itself but rather a linear modification of it.

A map $F:\rR \subset \R^n \to \R^n$ is called a $P-$function if for any $x, y \in \rR, x \neq y$, there exists an index $k=k(x,y)$ such that 
$$(x_k-y_k)(f_k(x)-f_k(y))>0.$$
Here $x_k$ and $f_k(x)$ are the $k$-th components of $x$ and $F(x)$, respectively \cite{MoreRheinboldt}. It is known that if $F$ is a $P-$function if and only if it is injective and its inverse $F^{-1}$ is also a $P-$function \cite{MoreRheinboldt,Rheinboldt}. Moreover, a differentiable map on a rectangle $\rR$, whose Jacobian is a $P-$matrix everywhere in $\rR$, is a $P-$function {\cite{GaleNikaido, MoreRheinboldt}.

In the case that $I$ is not a $P-$function, one way to prove injectivity is to map $I$ into a $P-$function via an invertible linear transformation, because then the invertibility of $I$ and the new map are equivalent. This simple fact is proven below.
\begin{prop}\label{TransformI}
Let $I, \tilde{I}: \rR \subset \R^n \to \R^n$ be two maps such that $\tilde{I} = \mathcal{A} \circ I$ where $\mathcal{A}: \R^n \to \R^n$ is an invertible linear transformation. Then, $\tilde{I}$ is injective in $\rR$ if and only if $I$ is injective in $\rR$.
\end{prop}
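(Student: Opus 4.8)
The plan is to reduce this to the elementary fact that a composition of injective maps is injective, together with the observation that an invertible linear transformation is in particular a bijection of $\R^n$ onto itself, hence injective.

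First I would prove the forward direction. Assume $I$ is injective in $\rR$. Since $\mathcal{A}:\R^n\to\R^n$ is invertible, it is injective on all of $\R^n$, and in particular on the image $I(\rR)$. Given $x,y\in\rR$ with $\tilde I(x)=\tilde I(y)$, i.e.\ $\mathcal{A}(I(x))=\mathcal{A}(I(y))$, injectivity of $\mathcal{A}$ forces $I(x)=I(y)$, and then injectivity of $I$ forces $x=y$. Hence $\tilde I$ is injective in $\rR$.

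For the converse, I would exploit the symmetry of the hypothesis: since $\mathcal{A}$ is invertible, $\mathcal{A}^{-1}:\R^n\to\R^n$ is again an invertible linear transformation, and $I=\mathcal{A}^{-1}\circ\tilde I$. Applying the forward direction already established, with the roles of $I$ and $\tilde I$ interchanged and $\mathcal{A}$ replaced by $\mathcal{A}^{-1}$, we conclude that $I$ is injective in $\rR$ whenever $\tilde I$ is. This closes the equivalence.

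There is no real obstacle here; the statement is a routine bookkeeping fact, and the only thing worth stating carefully is that invertibility of $\mathcal{A}$ is used twice — once to get injectivity of $\mathcal{A}$ itself (for the forward implication) and once to form $\mathcal{A}^{-1}$ and write $I=\mathcal{A}^{-1}\circ\tilde I$ (for the reverse implication). The point of the proposition, which need not be belabored in the proof, is the consequence recorded afterwards: to establish injectivity of $I$ it suffices to exhibit one invertible $\mathcal{A}$ for which $\mathcal{A}\circ I$ has a $P-$matrix Jacobian on $\rR$, since by Theorem~\ref{multi-energy CT-injectivity} that makes $\mathcal{A}\circ I$ injective, and hence $I$ injective by the equivalence just proven.
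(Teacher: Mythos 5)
Your proof is correct and amounts to the same elementary argument as the paper's: the paper simply condenses both directions into one line by writing $\tilde{I}(x)-\tilde{I}(x') = A\bigl(I(x)-I(x')\bigr)$ and invoking the trivial null space of $A$, whereas you phrase it as injectivity of $\mathcal{A}$ and of $\mathcal{A}^{-1}$ composed with the respective map. No gap; nothing further needed.
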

\begin{proof} 
It is immediate: Observe that 
$$\tilde{I}(x)-\tilde{I}(x') =  (\mathcal{A} \circ I) (x) -  (\mathcal{A} \circ I)(x') = A(I(x)-I(x')),$$
where $A$ is the matrix of $\mathcal{A}$, that is $\mathcal{A}(y) = Ay$.
Thus, if $x, x' \in \rR$, $x \neq x'$, then $\tilde{I}(x) \neq \tilde{I}(x')$ iff $I(x) \neq I(x')$ as $null(A) = \{ 0\}$.
\end{proof}
Transforming $I$ linearly into a $P-$function is equivalent to finding a matrix $A$ (independent of $x$) such that $AJ(x)$ is a $P-$matrix for all $x \in \rR$. 
 
\subsection{Quantitative $P-$functions}
The injectivity results obtained from the work in \cite{GaleNikaido} are not quantitative, and thus cannot be applied directly to derive stability estimates for the reconstructions. 
We first obtain the following extension.
\begin{prop}\label{Qprop}
If $A$ is a $P-$matrix, then there is $\mu>0$ such that $A-\mu \I$ is a $P-$matrix.
\end{prop}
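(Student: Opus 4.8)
The plan is to rely on the geometric characterization of $P-$matrices recalled above: a matrix $B$ is a $P-$matrix if and only if it reverses the sign of no nonzero vector, i.e. for every $x \neq 0$ there exists an index $i$ with $x_i(Bx)_i > 0$. It is convenient to encode this by the function
\[
  \phi_B(x) := \max_{1 \leq i \leq n} x_i (Bx)_i ,
\]
so that $B$ is a $P-$matrix precisely when $\phi_B(x) > 0$ for every $x \neq 0$. Since each $x \mapsto x_i(Bx)_i$ is a quadratic form, $\phi_B$ is continuous and homogeneous of degree two, so it suffices to test positivity of $\phi_B$ on the unit sphere $\{|x| = 1\}$.

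First I would use compactness. Because $A$ is a $P-$matrix, $\phi_A(x) > 0$ for all $x$ with $|x| = 1$, and since $\phi_A$ is continuous and the sphere is compact, $m := \min_{|x|=1} \phi_A(x)$ is attained and strictly positive. Next I would compare $\phi_{A - \mu\I}$ with $\phi_A$ for $\mu > 0$. Fixing $x$ and letting $i^\ast$ be an index attaining $\phi_A(x) = x_{i^\ast}(Ax)_{i^\ast}$, and noting that $x_i((A-\mu\I)x)_i = x_i(Ax)_i - \mu x_i^2$, we get
\[
  \phi_{A-\mu\I}(x) \;\geq\; x_{i^\ast}(Ax)_{i^\ast} - \mu x_{i^\ast}^2 \;=\; \phi_A(x) - \mu x_{i^\ast}^2 \;\geq\; \phi_A(x) - \mu
\]
whenever $|x| = 1$, since then $x_{i^\ast}^2 \leq 1$. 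Hence $\phi_{A-\mu\I}(x) \geq m - \mu$ on the unit sphere.

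Finally I would choose any $\mu \in (0, m)$, say $\mu = m/2 > 0$: then $\phi_{A-\mu\I}(x) \geq m/2 > 0$ for all unit vectors $x$, hence for all nonzero $x$ by homogeneity, so $A - \mu\I$ is a $P-$matrix by the geometric characterization. An alternative, softer argument is available: all principal minors are polynomial, hence continuous, functions of the matrix entries, so the set of $P-$matrices is open; since $\mu \mapsto A - \mu\I$ is continuous and equals the $P-$matrix $A$ at $\mu = 0$, it remains a $P-$matrix for all sufficiently small $\mu > 0$. I do not anticipate a genuine obstacle here; the only point to watch is that the maximizing index in $\phi_A$ and in $\phi_{A-\mu\I}$ need not coincide, which is why the comparison is phrased as the one-sided bound $\phi_{A-\mu\I}(x) \geq \phi_A(x) - \mu$ rather than an equality — and this inequality is all that is needed.
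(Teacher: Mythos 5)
Your proof is correct and rests on the same ingredients as the paper's: the Fiedler--Pt\'{a}k sign-reversal characterization of $P-$matrices together with compactness of the unit sphere, with your direct minimization of $\phi_A$ being just the contrapositive form of the paper's subsequence-and-contradiction argument (and it has the small bonus of exhibiting an explicit admissible range $0<\mu<\min_{|x|=1}\phi_A(x)$). Your alternative remark that the set of $P-$matrices is open because principal minors depend continuously on the entries is also a valid, even shorter, justification.
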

\begin{proof}
Assume for a contradiction that there is no $\mu>0$ such that $A-\mu \I$ is a $P-$matrix. This means that for all $\mu>0$ there is a nonzero vector $u$ such that $u_i(Au-\mu u)_i \leq 0$ for all $i=1,\dots, n$. Then, we can find a sequence $u^{\{j\}}$ with $\|u^{\{j\}}\|=1$ such that $u^{\{j\}}_i(Au^{\{j\}})_i \leq \mu_j (u^{\{j\}}_i)^2$ for all $i=1,\dots, n$ with $\mu_j\to0$. On the unit sphere, we find a subsequence, still called $u^{\{j\}}$, converging to $v$. Now $\|v\|=1$, and by continuity, $v_i(Av)_i \leq 0$ for all $i=1,\dots, n$. Therefore $A$ reverses the sign of $v$, which leads to a contradiction as $A$ is a $P-$matrix.
\end{proof}

Note that this simply shows that if $A$ is a $P-$matrix, then $A-\lambda \I$ is still a $P-$matrix for all $0 \leq \lambda \leq \mu$.
\begin{definition}
  Let $I$ be a $P-$function on a closed rectangle $\rR$ with continuous Jacobian $x\mapsto J(x)$. We define
\[
		\mu := \max_{\lambda>0}\{I(x)-\lambda x \text{ is a $P-$function on } \rR \}.
\]
We call $\mu=\mu(I)$ the injectivity constant of $I$. 
\end{definition}

Note that by the preceding proposition, $\mu=\mu(x)>0$ exists for all $x\in\rR$ and by continuity and compactness, there is a largest such lower bound $\mu>0$. We now have the following quantitative reformulation of \cite[Theorem 3]{GaleNikaido}.  
\begin{prop}
  Let $I(x)$ be a $P-$function on $\rR$ with continuous Jacobian $x\mapsto J(x)$ and injectivity constant $\mu$. Let $0\leq \lambda \leq \mu$,  $a\in \rR$ and define
\[
   X =\big\{  x\geq a,\quad I(x)-I(a) \leq \lambda (x-a) \big\}.
\]
Then $X=\{a\}$.
\end{prop}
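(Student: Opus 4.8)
The plan is to reduce everything to the defining sign property of a $P-$function applied to the shifted map $g(x) := I(x) - \lambda x$. The first step is to record that $g$ is a $P-$function on $\rR$ for every $\lambda \in [0,\mu]$. For $\lambda = 0$ this is the standing hypothesis on $I$; for $\lambda = \mu$ it is the definition of the injectivity constant; and for intermediate $\lambda$ it follows by writing $g(x) = \big(I(x) - \mu x\big) + (\mu - \lambda)x$ and observing that adding a nonnegative multiple of the identity map to a $P-$function produces a $P-$function. Indeed, if $F$ is a $P-$function and $c \ge 0$, then for $x \ne y$ one picks the index $k$ with $(x_k - y_k)(f_k(x) - f_k(y)) > 0$; since $c(x_k - y_k)^2 \ge 0$, the same index $k$ witnesses $(x_k - y_k)\big((f_k(x) + c x_k) - (f_k(y) + c y_k)\big) > 0$.

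The second step is to translate membership in $X$ into two coordinatewise inequalities for $g$. If $x \in X$, then $x \ge a$ gives $x - a \ge 0$, and the condition $I(x) - I(a) \le \lambda (x - a)$ rearranges to
\[
  g(x) - g(a) = \big(I(x) - I(a)\big) - \lambda(x - a) \le 0,
\]
both inequalities understood componentwise. Consequently $(x_k - a_k)\big(g_k(x) - g_k(a)\big) \le 0$ for every index $k \in \{1,\dots,n\}$.

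The final step is to invoke the $P-$function property of $g$: if $x \ne a$ there exists an index $k$ with $(x_k - a_k)\big(g_k(x) - g_k(a)\big) > 0$, contradicting the previous step. Hence $x = a$, and since $a$ trivially satisfies both constraints defining $X$, we conclude $X = \{a\}$.

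I do not anticipate a genuine obstacle here: once the shift $g = I - \lambda x$ is introduced, the statement is essentially a one-line application of the characterization of $P-$functions via non-reversal of signs. The only point that needs minor care is the first step --- checking that $g$ remains a $P-$function throughout the closed interval $[0,\mu]$ rather than only at its endpoints --- and this is also precisely where the hypothesis $\lambda \le \mu$ (as opposed to $\lambda = \mu$) enters.
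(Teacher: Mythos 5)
Your proof is correct, but it takes a genuinely different and more elementary route than the paper. The paper argues at the matrix level: it invokes Proposition \ref{Qprop} and the definition of $\mu$ to conclude that $J(x)-\lambda \I$ is a $P-$matrix throughout $\rR$, and then cites Theorem 3 of Gale--Nikaido, which is the nontrivial result asserting exactly that $\{x\geq a,\ F(x)\leq F(a)\}=\{a\}$ for differentiable maps with $P-$matrix Jacobians. You instead work directly with the defining sign property of $P-$functions applied to $g=I-\lambda x$: your observation that adding $c\,x$ with $c\geq 0$ to a $P-$function yields a $P-$function handles the interpolation $0\leq\lambda\leq\mu$ at the function level, and then the conclusion is a one-line sign-reversal contradiction, since for $x\in X$, $x\neq a$, every coordinate satisfies $(x_k-a_k)(g_k(x)-g_k(a))\leq 0$. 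This buys two things: you never need differentiability or continuity of $J$, and you avoid the paper's intermediate step ``$I-\lambda x$ is a $P-$function, thus $J(x)-\lambda\I$ is a $P-$matrix,'' which as literally stated is the (generally false) converse of the Gale--Nikaido implication; your argument simply does not need the Jacobian at all once $I-\mu x$ is known to be a $P-$function. One caveat common to both arguments: you, like the paper, read the definition of $\mu$ as a maximum that is attained, i.e.\ that $I-\mu x$ itself is a $P-$function; if only the supremum were available one should restrict to $\lambda<\mu$ (or argue at $\lambda$ slightly below $\mu$), but this is an imprecision in the paper's definition rather than a gap you introduced.
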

\begin{proof}
Let $\mu$ and $\lambda$ be given as in the theorem. We know from Proposition \ref{Qprop} that $I(x)-\lambda x$ is a $P-$function, and thus $J(x)-\lambda \I$ is a $P-$matrix at every $x \in \rR$. Thus, the application of Theorem 3 in \cite{GaleNikaido} yields the result. 
\end{proof}

We can now obtain the following quantitative estimate of injectivity.
\begin{theorem}\label{invI is Lipschitz}
  Let $I(x)$ and $\lambda$ as in the preceding Proposition.  Then, for all $x$ and $a$ in $\rR$, we have that 
\[
  |I_i(x)-I_i(a)| \geq \lambda |x_i-a_i|.
\]
Therefore, in any $l^p$ norm, $\|I(x)-I(a)\|_p\geq \lambda \|x-a\|_p$. 
\end{theorem}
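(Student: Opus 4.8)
The plan is to estimate $I_i(x)-I_i(a)$ directly by integrating the Jacobian along the segment joining $a$ to $x$, which stays in $\rR$ because a closed rectangle is convex. Setting $\xi_t=a+t(x-a)$ and applying the fundamental theorem of calculus to each component $t\mapsto I_i(\xi_t)$ gives
\[
  I_i(x)-I_i(a)=\int_0^1\sum_{j=1}^n J_{ij}(\xi_t)\,(x_j-a_j)\,dt,\qquad 1\le i\le n.
\]
Two structural facts drive the argument. First, every entry of $J(\xi)$ is strictly positive, as already observed right after the Jacobian was written down. Second, for $0\le\lambda\le\mu$ the matrix $J(\xi)-\lambda\I$ is a $P-$matrix at every $\xi\in\rR$: this is exactly what the proof of the preceding Proposition yields (through Proposition \ref{Qprop} and the definition of the injectivity constant), so in particular its $1\times 1$ principal minors are positive, i.e. $J_{ii}(\xi)>\lambda$ for every $i$ and every $\xi\in\rR$.

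From here the estimate follows by isolating the diagonal term. When $x\ge a$ (the case $x\le a$ being identical after a global sign flip), every summand $J_{ij}(\xi_t)(x_j-a_j)$ in the integrand is nonnegative, so dropping the off-diagonal ones and using $J_{ii}(\xi_t)>\lambda$ gives
\[
  I_i(x)-I_i(a)\ \ge\ \int_0^1 J_{ii}(\xi_t)\,(x_i-a_i)\,dt\ \ge\ \lambda\,(x_i-a_i)\ \ge\ 0,
\]
which is the desired componentwise bound $|I_i(x)-I_i(a)|\ge\lambda|x_i-a_i|$. Raising these to the $p$-th power and summing over $i$ then gives $\|I(x)-I(a)\|_p^p\ge\lambda^p\|x-a\|_p^p$, hence the stated $l^p$ inequality.

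The delicate point, and the one I expect to be the real obstacle, is passing from monotone-comparable pairs to arbitrary $x,a\in\rR$. For a general increment $x-a$ the off-diagonal contributions $J_{ij}(\xi_t)(x_j-a_j)$ need not share the sign of $x_i-a_i$ and may cancel the diagonal term, so the ``discard the nonnegative terms'' step collapses; relatedly, the averaged matrix $\int_0^1 J(\xi_t)\,dt$ need not be of the form ($P-$matrix)$+\lambda\I$, since that class is not convex. The $P-$function property of $I-\lambda\,\mathrm{id}$ from Proposition \ref{Qprop} still produces, for each pair $(x,a)$, at least one index $k$ with $(x_k-a_k)(I_k(x)-I_k(a))>\lambda(x_k-a_k)^2$, hence $|I_k(x)-I_k(a)|>\lambda|x_k-a_k|$ for that $k$; promoting this single good coordinate to the full componentwise statement — or, failing that, reading the conclusion for comparable $x,a$, which is still the form needed for the tomographic stability estimate — is where the argument has to be finished.
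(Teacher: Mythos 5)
Your treatment of comparable pairs is correct and complete: convexity of the rectangle, the fundamental theorem of calculus along the segment, strict positivity of all entries of $J$, and $J_{ii}\geq\lambda$ (the $1\times1$ principal minors of $J-\lambda\I$) give $I_i(x)-I_i(a)\geq\lambda(x_i-a_i)$ for \emph{every} $i$ when $x\geq a$. This is in fact a sounder justification than the paper's own first step, which simply cites the preceding Proposition; that Proposition only says that not all components can satisfy $I_i(x)-I_i(a)\leq\lambda(x_i-a_i)$, i.e.\ it produces one good index, not the componentwise bound, so your integral argument is the right way to close that step.

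Where your proposal stops, the paper completes the proof by conjugating with a diagonal sign matrix $D$ (entries $\pm1$) chosen so that $Dx\geq Da$: principal minors are invariant under $A\mapsto DAD$, so $D\circ I\circ D$ and its $\lambda$-shift remain $P-$functions, and the paper then asserts the componentwise bound for the transformed, comparable pair. The obstacle you flagged is exactly where that assertion fails: conjugation by $D$ destroys the positivity of the off-diagonal entries, so the integral argument does not transfer, and the $P-$property alone yields only the single index $k$ you derived. The gap is not repairable at the stated level of generality: take the constant-Jacobian map $I(x)=Jx$ with $J=\left(\begin{smallmatrix}1&2\\0.1&1\end{smallmatrix}\right)$ on $[0,1]^2$ (all entries positive, as in ME-CT); then $J-\lambda\I$ is a $P-$matrix for $\lambda=\tfrac12<1-\sqrt{0.2}=\mu$, yet for $x-a=(1,-0.4)$ one has $I(x)-I(a)=(0.2,-0.3)$, which violates both $|I_1(x)-I_1(a)|\geq\lambda|x_1-a_1|$ and $\|I(x)-I(a)\|_p\geq\lambda\|x-a\|_p$. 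So the theorem and the paper's proof are only justified for comparable pairs ($x\geq a$ or $a\geq x$) --- precisely the case your argument establishes; for general pairs one retains only the one-index estimate you wrote down (or a norm bound governed by the smallest singular value of the averaged Jacobian), and your instinct that the sign-flip reduction ``collapses'' is correct rather than a defect of your proof.
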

\begin{proof}
  For $x\geq a$, we observed that $I(x)-I(a)\geq \lambda(x-a)\geq0$ in the preceding theorem. This implies the above estimate. Now consider a diagonal change of variables $D:\R^n\to\R^n$, which to each variable $x_i$ associates $\pm x_i$. We verify that $D\circ I \circ D$ is a $P-$function as an immediate property of minors of $P-$matrices. For any pair of elements $(x,a)$ in $\rR$, we find a $D$ such that $Dx\geq Da$. Therefore, $DI_i(x)-DI_i(a)\geq \lambda (Dx-Da)\geq0$ and hence   $|I_i(x)-I_i(a)| \geq \lambda |x_i-a_i|$. This proves the quantitative injectivity result.
\end{proof}

Note that on its range $I(\rR)$, the function $I$ is injective and hence invertible. The inverse is then a $P-$function as is well known and is moreover Lipschitz in the sense that $|I_i^{-1}(x) - I^{-1}_i(a)| \leq \lambda^{-1} |x_i-a_i|$.

Let us remark that in the above proofs, all we show is that $I-\lambda x$ is a $P-$function. This comes from the fact that $J(x)-\lambda \I$ is a $P-$matrix at every $x \in \rR$.

\subsection{The case of redundant measurements}
We now consider the case $m > n$ with more energy measurements than unknown material densities. 

The global injectivity in the redundant setting is fairly similar to the determined case (where $m=n$) in the sense that global properties cannot come from local ones. One can easily construct two one-dimensional functions such that, at each point, at least one of them have positive derivative, and yet the two functions can meet at several points. For example, let $I : [0,1] \to \R^2$, $I(x)=(f(x),g(x))$ with
\begin{align*}
f(x)=
   \begin{cases} 
     x, & x \in [0,\tfrac23], \\
     2-2x,  & x \in [\tfrac23,1],\\
   \end{cases}
   \quad \text{and} \quad  g(x)=
   \begin{cases} 
     -2x, & x \in [0,\tfrac13], \\
     x-1,  & x \in [\tfrac13,1].
   \end{cases}
\end{align*}
 The best available derivative from both functions equals $1$ throughout the interval $(0,1)$ and yet $f(0)=f(1)=g(0)=g(1)=0$, so injectivity for the family $\{f,g\}$ does not hold. One of the functions therefore must handle global injectivity in part of the domain without influence from the other one.  
 
 Consider $I : \R \to \R^2$, $I(x)=(f(x),g(x))$ with
\begin{align*}
f(x)=
   \begin{cases} 
     k, & x \in [2k-1,2k], \\
     x-k,  & x \in [2k,2k+1],\\
   \end{cases}
    \quad \text{and} \quad g(x) = x-f(x), \quad k \in \Z.
\end{align*}
Now $f$ has derivative $1$ on the intervals $[2k,2k+1]$ and derivative $0$ on the intervals $[2k-1,2k]$ while $g$ has derivatives $0$ and $1$ on these intervals, respectively. Combined, we find an injectivity constant $\mu = 1$ and an effective constant $\tfrac{\mu}{|{\K|}}=\tfrac12$, while the injectivity constant of each function individually is $0$. Replacing the above $0$ derivatives by $\eps$, we are in the setting of the above result with two bona-fide $P-$functions that collectively provide much better stability than individually.

 This behavior is prevented by assuming that all functions of interest $I_K$ are $P-$functions throughout the domain of interest. Although we do not pursue here, one can certainly consider generalizations where $x\mapsto I_K(x)-\mu x$ is a $P-$function on some part of the domain while $x\mapsto I_K(x)+\mu x$ is a $P-$function on other parts of the domain, with $\mu \geq 0$ sufficiently small that injectivity is still achieved. 
 
 The above notions generalize to the multi-dimensional setting. Let 
 $$ \K = \{ K \subset \{1,\dots,m\} \; | \; |K|=n \}. $$
Clearly, $ |\K| =\tbinom{m}{n}$. For a given $I:\rR\subset \R^n \to \R^m$, for each $K\in{\K}$, we denote by $I_K:\rR\subset\R^n\to\R^n$ the corresponding subsystem. 

While global injectivity for a given pair of points $(x,a)$ has to be obtained from a fixed subsystem, that system may vary for different pairs of points $(x,a)$. 
\begin{definition}
  We say that $\{ I_K : K\in {\K}\}$ is a $P-$family with injectivity constant $\mu>0$ if 
  \begin{enumerate}
  \item for each $K\in \K$, $I_K$ is a $P-$function on $\rR$, and
  \item there is a cover of $\rR$ by rectangles $U_{\a}$, $\a \in{\cI}$, such that for each $\a \in{\cI}$, there exists $K\in{\K}$ with $I_K-\mu x$ being a $P-$function on $U_{\a}$.
  \end{enumerate}
\end{definition}
The following result is an analog of Theorem \ref{invI is Lipschitz} in the redundant measurement setting.
\begin{theorem}
  Let  $\{ I_K : K\in {\K}\}$ be a $P-$family with injectivity constant $\mu>0$. For any $a,x$ in $\rR$, if $[a,x]$ is the line segment joining $a$ and $x$, then $[a,x] \subset \cup_{j=0}^{k-1} U_{\a_j}$ for some $k\geq 1$ and $\a_j \in{\cI}$. Let 
$
  \K' = \{K \in \K \; | \: I_{K}-\mu x \text{ is a } P-\text{function on } U_{\a_j} \text{ for some } 0\leq j\leq k-1\}.
$
Then, we have
\[
  \frac{1}{|\K'|}\Big\|\sum_{K \in \K'} I_{K}(x)-I_{K}(a) \Big\| \geq \left(\frac{\mu-\mu_0}{|\K'|}+\mu_0 \right)\|x-a\|,
\]
where $\mu_0 = \min_{K \in \K'}\mu(I_K)$.
\end{theorem}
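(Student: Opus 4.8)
\emph{Proof proposal.} The plan is to reduce to the monotone case $x\ge a$ by a diagonal sign change exactly as in the proof of Theorem~\ref{invI is Lipschitz}, then to cut $[a,x]$ into pieces adapted to the cover $\{U_\alpha\}$, to derive on each piece a componentwise lower bound in which one distinguished subsystem contributes the improved slope $\mu$ while the remaining $|\K'|-1$ subsystems contribute their common floor $\mu_0$, and finally to telescope along $[a,x]$ and take norms. For the reduction, let $D$ be the diagonal $\pm1$ matrix with $D_{ii}=\sgn(x_i-a_i)$, so that $D(x-a)\ge0$. As in the proof of Theorem~\ref{invI is Lipschitz}, $D\circ I_K\circ D$ is again a $P-$function for each $K$ (its principal minors are unchanged, each being multiplied by $(\det D|_S)^2=1$), one has $D\circ(I_K-\mu x)\circ D=(D\circ I_K\circ D)-\mu x$ since $D^2=\I$, the constants $\mu(I_K)$ are preserved, and $D$ sends rectangles to rectangles and is an $l^p$-isometry that commutes through the sum over $K$. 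Hence it suffices to prove the estimate with $I_K$ replaced by $D\circ I_K\circ D$, the $U_\alpha$ by $DU_\alpha$, and the points $Dx\ge Da$; we therefore assume $x\ge a$ henceforth.

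Parametrize $[a,x]$ by $\gamma(t)=a+t(x-a)$, $t\in[0,1]$. This set is compact and covered by $\{U_\alpha\}$, yielding the finite subcover $U_{\alpha_0},\dots,U_{\alpha_{k-1}}$ of the statement; taking the $U_\alpha$ open and using a Lebesgue-number argument, pick a partition $0=s_0<\dots<s_N=1$ so fine that each $\gamma([s_l,s_{l+1}])$ lies in some $U_{\alpha_{j(l)}}$, $j(l)\in\{0,\dots,k-1\}$, and set $y_l=\gamma(s_l)$, so that the straight sub-segment $[y_l,y_{l+1}]$ coincides with $\gamma([s_l,s_{l+1}])\subset U_{\alpha_{j(l)}}$ and $y_{l+1}\ge y_l$. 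By condition (ii) of the $P-$family definition there is $\kappa(l)\in\K$ with $I_{\kappa(l)}-\mu x$ a $P-$function on $U_{\alpha_{j(l)}}$, and then $\kappa(l)\in\K'$ since $U_{\alpha_{j(l)}}$ is one of $U_{\alpha_0},\dots,U_{\alpha_{k-1}}$.

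Now telescope. Fix $l$. For $K=\kappa(l)$, since $I_{\kappa(l)}-\mu x$ is a $P-$function on the rectangle $U_{\alpha_{j(l)}}\supset[y_l,y_{l+1}]$, the argument in the proof of Theorem~\ref{invI is Lipschitz} applied to that rectangle (with $\lambda=\mu$) gives $I_{\kappa(l)}(y_{l+1})-I_{\kappa(l)}(y_l)\ge\mu(y_{l+1}-y_l)\ge0$ componentwise. For every other $K\in\K'$, $I_K$ is a $P-$function on $\rR$ with injectivity constant $\mu(I_K)\ge\mu_0$, hence $I_K-\mu_0 x$ is a $P-$function on $\rR$ by the remark following Proposition~\ref{Qprop}, and the same argument gives $I_K(y_{l+1})-I_K(y_l)\ge\mu_0(y_{l+1}-y_l)\ge0$. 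Adding these $|\K'|$ inequalities, then summing over $l=0,\dots,N-1$ and telescoping,
\[
\sum_{K\in\K'}\big(I_K(x)-I_K(a)\big)\ \ge\ \big(\mu+(|\K'|-1)\mu_0\big)(x-a)\ \ge\ 0 \quad\text{componentwise.}
\]
Applying the norm, which as in Theorem~\ref{invI is Lipschitz} we take monotone on the nonnegative orthant (e.g.\ any $l^p$ norm), and dividing by $|\K'|$ gives
\[
\frac1{|\K'|}\Big\|\sum_{K\in\K'} I_K(x)-I_K(a)\Big\|\ \ge\ \frac{\mu+(|\K'|-1)\mu_0}{|\K'|}\,\|x-a\|=\Big(\frac{\mu-\mu_0}{|\K'|}+\mu_0\Big)\|x-a\|,
\]
and undoing $D$ (an $l^p$-isometry commuting through the sum) removes the hypothesis $x\ge a$.

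The main obstacle, beyond routine bookkeeping, is that the subsystem carrying the improved slope $\mu$ varies from piece to piece along $[a,x]$: in the redundant setting, as the one-dimensional examples above show, there need be no single $K$ with $I_K-\mu x$ a $P-$function on all of $[a,x]$, so Theorem~\ref{invI is Lipschitz} cannot be invoked for a fixed subsystem. The resolution is the accounting used above --- on each piece one credits its own witness $\kappa(l)$ with slope $\mu$ and, crucially, the other $|\K'|-1$ members of $\K'$ with the global floor $\mu_0$ from condition (i) rather than with $0$, since they remain $P-$functions on all of $\rR$ --- and then telescoping turns the piece-independent per-piece coefficient $\mu+(|\K'|-1)\mu_0$ into the claimed $\tfrac{\mu-\mu_0}{|\K'|}+\mu_0$ after division by $|\K'|$, with no degradation in the number $N$ of pieces because the increments telescope. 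A further point is that $I_K-\mu x$ is guaranteed to be a $P-$function only on the covering rectangle containing a given piece, not on all of $\rR$, which is precisely what forces the piecewise argument and the discretization of $[a,x]$.
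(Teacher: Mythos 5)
Your proof is correct and follows essentially the same route as the paper's: reduce to $x\geq a$ via the diagonal sign change $D$, partition $[a,x]$ into monotone sub-segments subordinate to the cover $\{U_\alpha\}$, apply Theorem~\ref{invI is Lipschitz} on each piece with the local witness contributing slope $\mu$ and the remaining members of $\K'$ contributing $\mu_0$, then telescope, take norms, and divide by $|\K'|$. The only differences (doing the $D$-reduction up front and making the Lebesgue-number/partition step explicit) are cosmetic refinements of the paper's argument.
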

\begin{proof}
Consider two points $x\geq a$ in $\rR$. Since $\{ I_K : K\in {\K}\}$ is a $P-$family with injectivity constant $\mu>0$, there is a cover of $\rR$ by rectangles $U_{\a}$, $\a \in{\cI}$, which may assumed to be closed, such that for each $\a \in{\cI}$, there exists $K\in{\K}$ with $I_K$ and $I_K-\mu x$ being $P-$functions on $\rR$ and $U_{\a}$, respectively. Now $[a,x] \subset \cup_{j=0}^{k-1} U_{\a_j}$ for some $k\geq 1$, $\a_j\in{\cI}$, and there exist points $a=y_0\leq y_1\leq\ldots \leq y_k=x$ such that $\{y_j,y_{j+1}\}\in U_{\a_j}$ for $\a_j\in{\cI}$ and $0\leq j\leq k-1$.  Since for each $0\leq j\leq k-1$, there is a $K\in{\K'} \subseteq \K$ such that $I_{K}$ and $I_{K}-\mu x$ are $P-$functions on $\rR$ and $U_{\a_j}$, respectively, we can apply Theorem \ref{invI is Lipschitz} to obtain
\begin{align*}
\sum_{K \in \K'} I_{K}(x)-I_{K}(a) &= \sum_{K \in \K'} \sum_{j=0}^{k-1}I_{K}(y_{j+1})-I_{K}(y_j) \\
 &= \sum_{j=0}^{k-1} \sum_{K \in \K'} I_{K}(y_{j+1})-I_{K}(y_j) \\
&\geq \sum_{j=0}^{k-1}  \mu (y_{j+1}-y_j) + (|\K'|-1)\mu_0 (y_{j+1}-y_j)\\
&=((\mu-\mu_0) +  |\K'|\mu_0)(x-a).
\end{align*}
Therefore,
\[
  \frac{1}{|\K'|}\sum_{K \in \K'} I_{K}(x)-I_{K}(a) \geq \left(\frac{\mu-\mu_0}{|\K'|}+\mu_0 \right)(x-a),
\]
which implies the result for $x\geq a$. 

Now for each pair $(a,x)$ in $\rR$, there is  a diagonal change of variables $D:\R^n\to\R^n$ as before such that $Dx\geq Da$. Moreover, $D\circ I_K \circ D$ remains a $P-$function on $\rR$ while $D\circ(I_K-\mu x) \circ D$ is a $P-$function on $D\rR_{\a}$, if $I_K$ and $I_K-\mu x$ are $P-$functions on $\rR$ and $\rR_{\a}$, respectively. We then apply the same decomposition as above in that new set of variables to get the estimate. 
\end{proof}
Let us now consider a set $\K'$ that is independent of the segment $[a,x]$ (we can always find such a set). The above results states that we may replace the measurements by the average $\tilde I(x)=\frac{1}{|\K'|}\sum_{K\in\K'} I_K(x)$. We then find that $\tilde I$, which may be constructed from available measurements is invertible and its inverse has a Lipschitz constant bounded by $(\frac{\mu-\mu_0}{|\K'|}+\mu_0)^{-1}$, which may be much smaller than $\mu_0^{-1}$ if $\mu$ is larger than $\mu_0$ and $|\K'|$ can be kept sufficiently small.

\section{Numerical Experiments}
In this section, we present the results of our numerical experiments for dual- and multi energy CT transform with two, three, and four commonly used materials and the corresponding number of energy measurements. In each case, for a fixed set of materials, we provide examples where the local and/or global injectivity of the problem is guaranteed. 

In the following, the below parameters were used:
\begin{itemize}
\item The diagnostic energy range $10\leq E\leq150$ (keV) was considered.
\item The energy spectra $S_i, \; i=1,\dots n,$ corresponding to given tube potentials $tp_i$ were computed using the publicly available code SPEKTR 3.0 \cite{Spektr3}. For practical purposes, integer valued tube potentials ranging from 40-150 kVp were considered. We denote $tp = (tp_1, \dots, tp_n)$. 
\item  The domain of the transform $I$:
\[
\rR = \Bigg\{ (x_1,\dots,x_n) \in \R_+^n : \; 0 \leq x_j \leq \frac{10}{\displaystyle \max_{10\leq E \leq 150} M_j(E)} \Bigg\},
\]
where $M_j(E)$ denotes the energy-dependent mass-attenuation of the $j$-th material, and $M(E) = (M_j(E))_{1\leq j\leq n}$.
We note that then $e^{-M(E)\cdot x} \geq e^{-10}$, which is even more conservative than practically relevant rectangle size.
\end{itemize}

\subsection{Two Materials-Two Measurements Case}
In view of theorem \ref{dual-energy CT-injectivity}, dual-energy CT problem is globally injective if the Jacobian never vanishes inside $\rR$. Considering two commonly used material pairs, namely (bone, water) and (iodine, water) in the said order, we tested whether the Jacobian can vanish inside $\rR$ for integer valued tube potentials varying from 40-150 kVp. 

For  (bone, water) material pair, there was no case of Jacobian vanishing inside $\rR$. On the other hand, for (iodine, water) pair, the probability of encountering a vanishing Jacobian was 22\%. The tube potential pairs $tp = (tp_1,tp_2)$ that lead to Jacobian vanishing inside $\rR$ is shown in the left panel of fig. \ref{fig:TP4dect}. We note that decreasing the density of iodine has no influence on vanishing of the Jacobian. It only makes it to have smaller values.

We also searched for tube potentials that lead to everywhere diagonally dominant or positive quasi-definite Jacobian matrix. For (bone, water) material pair, there was no case of diagonally dominant Jacobian, but the probability of finding a positive quasi-definite Jacobian was 6\%. The tube potential pairs $tp = (tp_1,tp_2)$ that lead to positive quasi-definite Jacobian matrix inside $\rR$ is depicted in the right panel of fig. \ref{fig:TP4dect}. However, for the (iodine,water) material pair, there was neither a case of the Jacobian being diagonally dominant nor being positive quasi-definite everywhere even when we changed the density of iodine.

\begin{figure}[h]
\begin{center}
   \includegraphics[width=\textwidth]{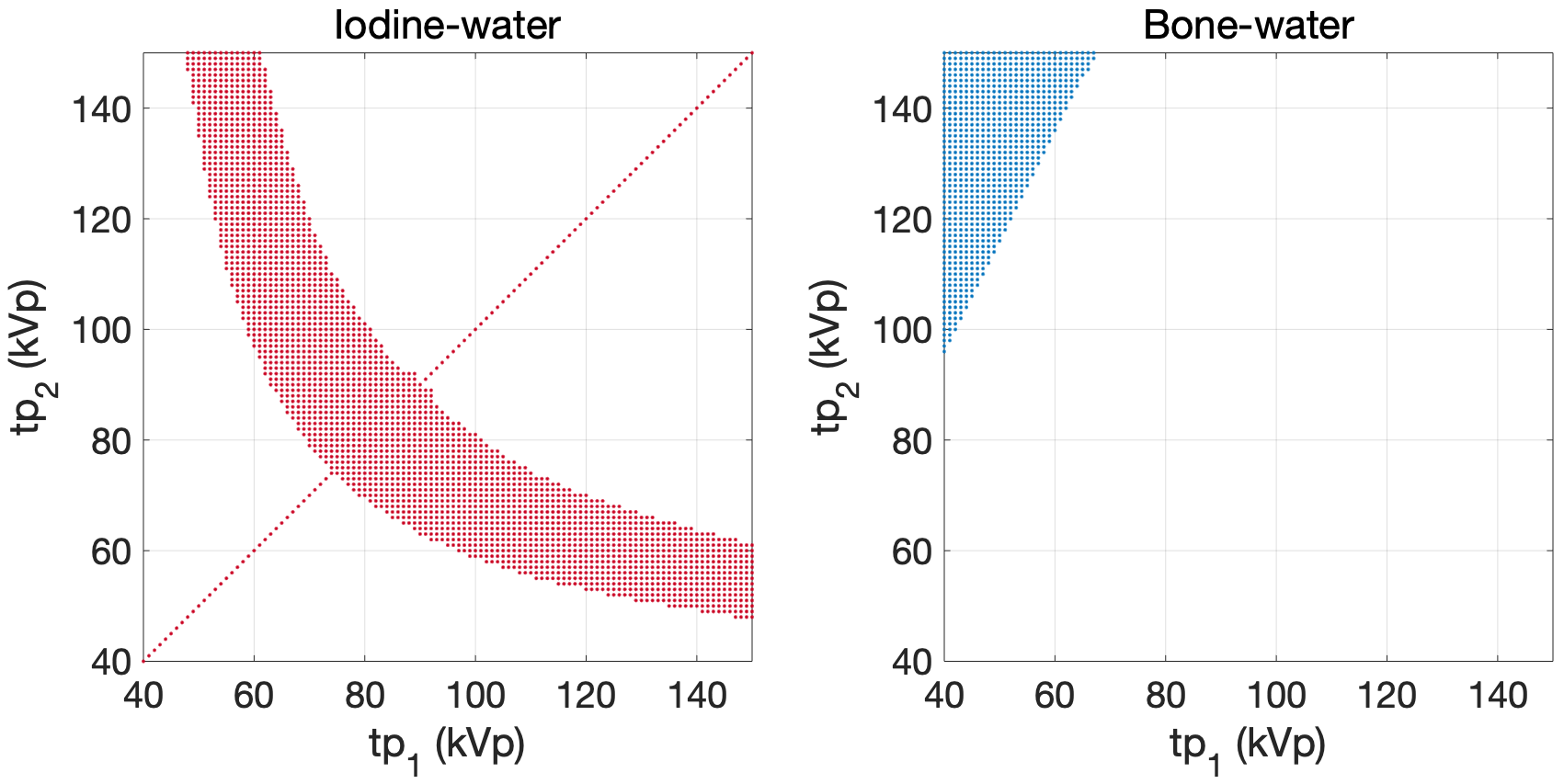}
    \caption{Left: The tube potential pairs $tp = (tp_1,tp_2)$ (in red) that lead to vanishing Jacobian inside $\rR$ for the material pair (iodine, water). Right: The tube potential pairs $tp = (tp_1,tp_2)$ (in blue) that lead to positive quasi-definite Jacobian matrix inside $\rR$ for the material pair (bone, water).}
    \label{fig:TP4dect}
\end{center}
\end{figure}

\subsection{Three Materials-Three Measurements Case}
In the following, we used a fixed set of materials (bone, iodine, water) in the said order. For varying tube potentials $tp$, we examined some phenomena that are related to the invertibility of the ME-CT transform. Below we present some representative examples.

\subsubsection{\textbf{The Jacobian can vanish inside the rectangle $\rR$}}
The probability of the Jacobian vanishing inside $\rR$ was around 4\%. (It is \%1 the spectra are separated, i.e. the tube potentials are different.) Fig. \ref{fig:TP40detJ3d} shows some tube potentials leading to vanishing Jacobian inside $\rR$. 
\begin{figure}[h]
\begin{center}
      \includegraphics[width=\textwidth]{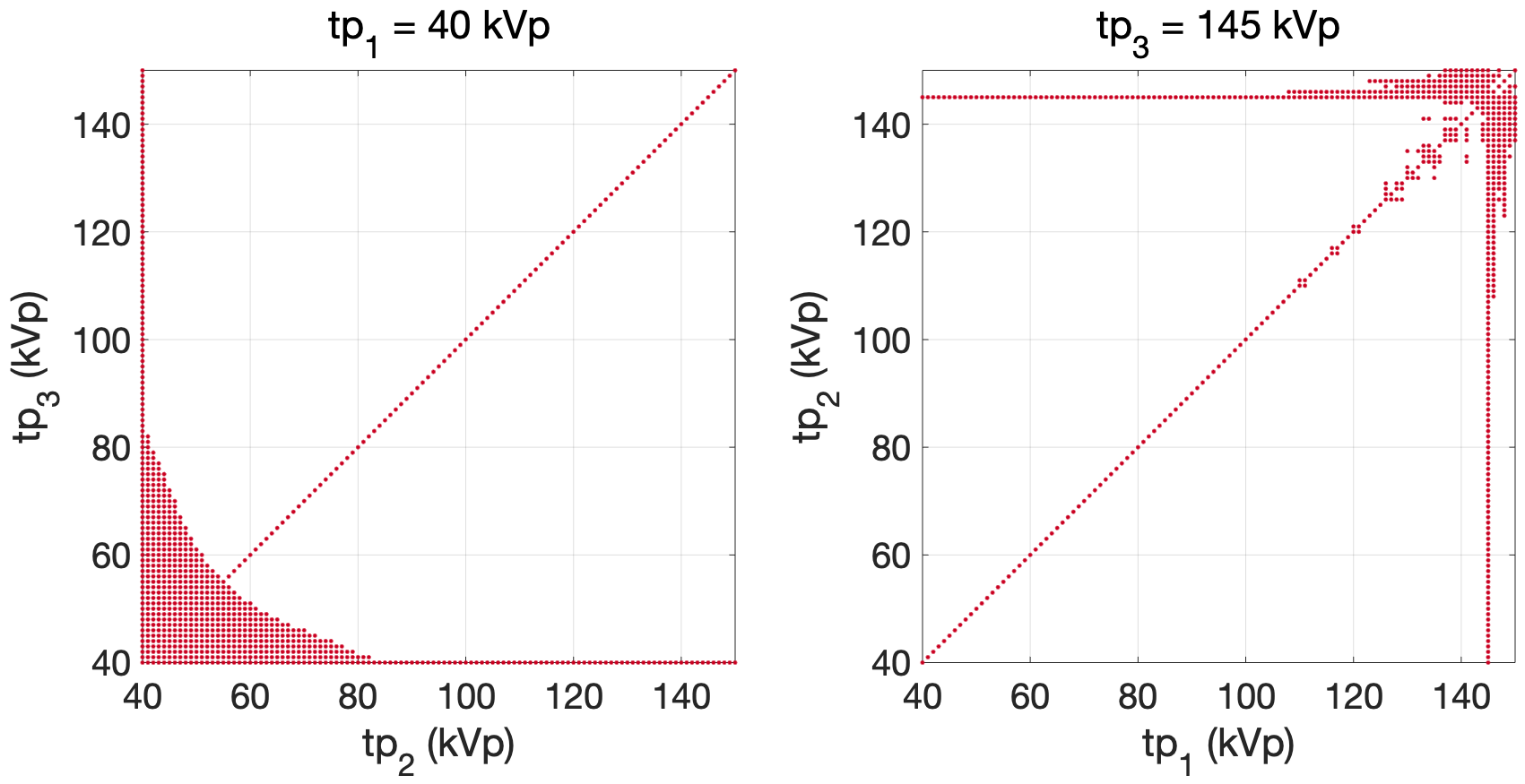}
    \caption{Some tube potentials $tp = (tp_1,tp_2,tp_3)$ that lead to vanishing Jacobian inside $\rR$ for the materials (bone, iodine, water). Scatter plots for $tp_2$ and $tp_3$ when $tp_1 = 40$ kVp(left) and for $tp_1$ and $tp_2$ when $tp_3 = 145$ kVp (right).}
        \label{fig:TP40detJ3d}
\end{center}
\end{figure}

 \subsubsection{\textbf{Transforming the mapping $\bm I$ into a $\bm P-$function when it is not so}} 
Transforming $I$ linearly into a $P-$function is equivalent to finding a matrix $A$ (independent of $x$) such that $AJ(x)$ is a $P-$matrix for all $x \in \rR$. According to our numerical experiments, this seems possible as long as $\det J(x)$ is nonvanishing everywhere in $\rR$. 

We considered several cases where $\det J(x) >0$ for all $x \in \rR$, but $J(x)$ is not a $P-$matrix for some $x \in \rR$. We note that the case $\det J(x) < 0$ for all $x \in \rR$ can be dealt with by exchanging two rows/columns of $J$. 

In view of proposition \ref{TransformI}, by using $10^6$ many random $3\times3$ matrices $A$ with $\det A=1$ (in order to keep the size of the volume fixed), we checked if $AJ(x)$ is a $P-$matrix for all $x \in \rR$. We observed that it is always possible to find a desired $A$, with a probability around 2-3\% (notice that the existence of even one such $A$ guarantees injectivity). One such example is $tp = (40, 60, 140)$, which leads to a Jacobian that is not a $P-$matrix everywhere in $\rR$ (see table \ref{tbl:minors4notPJacobian}). Among all $A$'s such that $AJ(x)$ is a $P-$matrix for all $x \in \rR$, the bigest injectivity constant obtained was $\mu = 0.2306$ when 
 \begin{align}\label{QuantitativeA4notP}
A=
\begin{bmatrix}
    \phantom{-}1.0542  &  -0.2669  & -0.8656\\
   -0.3485  &  \phantom{-}1.1163  &  -0.8111\\
   \phantom{-} 0.6081   &  \phantom{-}1.6056  &  -0.9398
\end{bmatrix}.
\end{align}
\begin{table}[h]
\begin{tabular}{|c|c|c|c|c|}
\hline
\multicolumn{5}{|c|}{tp = {(}40, 60, 140{)}} \\ \hline
\multirow{2}{*}{\begin{tabular}[c]{@{}c@{}}minor\\ assoc. to \end{tabular}} & \multicolumn{2}{c|}{minors of $J$} & \multicolumn{2}{c|}{minors of $AJ$} \\ \cline{2-5} 
& min              & max             & min              & max              \\ \hline
$\O$    & 0.8931 & 1.2797  &  0.8931 & 1.2797   \\ \hline
$\{1\}$ &  -0.0833 &  0.5299  &  1.4873 &  2.5446     \\ \hline
$\{2\}$ & 0.3128 & 0.5863 &  0.4145 & 0.7904      \\ \hline
$\{3\}$ & 15.890 & 25.794 &  0.8596 & 1.8397      \\ \hline
$\{2,3\}$ & 1.5097 &  2.7714  &  0.8888 & 1.6883      \\ \hline
$\{1,3\}$ &  12.861 &  16.132 &  0.2306 &  2.4807      \\ \hline
$\{1,2\}$ &  0.2163 &  0.2763 &   0.6443 &  0.9280    \\ \hline
\end{tabular}
\vspace{1em}
\caption{Minimum and maximum values of the minors of $J$ and $AJ$ attained in the rectangle $\rR$  for $tp = (40, 60, 140)$ and $A$ given as in \eqref{QuantitativeA4notP}.}
\label{tbl:minors4notPJacobian}
\end{table}
We also observed that for tube potentials that are in increasing order and at least 10 keV apart, the reason for $J$ not being a $P-$matrix was only one of the 2-minors becoming negative. This makes it possible to increase the probability of finding a desired $A$ up to around 10\% when the random $A$ matrices are chosen adaptively as we explain now. If $P-$matrix condition is violated because $[J]_{\{i\}}$ is nonpositive somewhere in $\rR$  for some $i=1,2,3$, the random $A$ matrices can be chosen from the set of real matrices 
$$\mathcal{M}_i := \{ A=[a_{kl}]_{k,l=1}^3 \;|\; a_{kk}=1 \text{ and } a_{kl}=0 \text{ for } k\neq l \neq i\}.$$
Then, for $j\neq i$, $[AJ]_{\{j\}}=[J]_{\{j\}}$. This is due to the following fact about the minors of product of two matrices. Suppose that $A$ and $J$ are $n\times n$ matrices, and $K$ and $L$ are subsets of $\{1,...,n\}$ with $k$ elements. Then,
\begin{align}\label{Cauchy-Binet}
[AJ]_{K,L} = \sum_{M}[A]_{K,M}[J]_{M,L}
\end{align}
where the summation runs over all subsets $M$ of $\{1,...,n\}$ with $k$ elements.
This formula is a generalization of the formula for ordinary matrix multiplication and the Cauchy-Binet formula for the determinant of the product of two matrices.

We finally note that if more than one 2-minor of $J$ were nonpositive, not necessarily at the same $x \in \rR$, then one could successively multiply $J$ with a suitable $A \in \mathcal{M}_i$ to have an everywhere $P-$matrix Jacobian.

\subsubsection{\textbf{Transforming a barely $\bm P-$function, into a quantitative $\bm P-$function}}
For example, when $tp = (50, 75, 110)$, the Jacobian is a $P-$matrix but the injectivity constant is equal to 0.0001. By using a linear transformation with matrix
\begin{align}\label{QuantitativeA4barelyP}
A=
\begin{bmatrix}
\phantom{-}0.7067 & -0.1425 & -0.8578\\
-0.2656 & \phantom{-}0.8144 & -0.5679\\
\phantom{-}0.7319 & \phantom{-}0.5258 & \phantom{-}0.1835\\
\end{bmatrix},
\end{align}
we obtained a $P-$function with injectivity constant equal to 0.3190 (see table \ref{tbl:minors4barelyPJacobian}).
\begin{table}[h]
\begin{tabular}{|c|c|c|c|c|}
\hline
\multicolumn{5}{|c|}{tp = {(}50, 75, 110{)}} \\ \hline
\multirow{2}{*}{\begin{tabular}[c]{@{}c@{}}minor\\ assoc. to \end{tabular}} & \multicolumn{2}{c|}{minors of $J$} & \multicolumn{2}{c|}{minors of $AJ$} \\ \cline{2-5} 
& min              & max             & min              & max              \\ \hline
$\O$    & 0.2408 & 0.3012  &  0.2408 & 0.3012    \\ \hline
$\{1\}$ & 0.0017 &  0.3154  &  0.3952 & 0.5292      \\ \hline
$\{2\}$ & 0.1924 & 0.3464 &  0.2350 & 0.4221     \\ \hline
$\{3\}$ &  8.6637 & 14.816 &   0.4634 & 0.7665    \\ \hline
$\{2,3\}$ & 2.3303 & 4.0498  & 0.5246 & 0.9661      \\ \hline
$\{1,3\}$ & 11.490 & 14.613 &  0.6563 & 1.1425     \\ \hline
$\{1,2\}$ &  0.2333 & 0.3071 &   0.4501 & 0.6253    \\ \hline
\end{tabular}
\vspace{1em}
\caption{Minimum and maximum values of the principal minors of $J$ and $AJ$ attained in the rectangle $\rR$ for $tp = (50, 75, 110)$ and $A$ given as in \eqref{QuantitativeA4barelyP}.}
\label{tbl:minors4barelyPJacobian}
\end{table}

\subsubsection{\textbf{Testing for cases where the Jacobian is a $\bm P-$matrix/ positive quasi-definite/ diagonally dominant everywhere}}
We first checked how often the Jacobian is a $P-$matrix everywhere for integer $tp$ values drawn randomly from the interval $[40,150]$ and having increasing order. 
The probability of finding a tube potential vector $tp$ that leads to everywhere $P-$matrix Jacobian was around 75\%. (Changing the density of iodine did not make much of a difference in this probability.) There was neither a case of the Jacobian being diagonally dominant or positive quasi-definite everywhere, nor an invertible linear transformation that leads to such a Jacobian matrix.

\subsection{Four Materials-Four Measurements Case}
In the following, we used a fixed set of materials (gadolinium, bone, iodine, water) in the said order. We observed similar phenomena as in the previous section. Below we present some representative examples obtained by varying the tube potentials $tp$.

\subsubsection{\textbf{The Jacobian can vanish inside the rectangle $\rR$}}
 The Jacobian can vanish inside $\rR$ with the probability around 12\%. For example, the choice $tp = (90, 120, 135, 150)$ leads to $\textstyle \min_{x \in \rR} \det \J(x) =-0.9 \times 10^{-5}$ and $\textstyle \max_{x \in \rR} \det \J(x) = 1.8 \times 10^{-5}$.

\subsubsection{\textbf{Transforming $\bm I$ into a $\bm P-$function when it is not so}}
We consider $tp = (60, 80, 100, 120)$ as an example.  The resulting ME-CT transform $I$ is not a $P-$function. In view of Proposition \ref{TransformI}, by using $10^8$ random matrices $A$ with det$A$=1, we tested if $AJ(x)$ is a $P-$matrix for all $x \in \rR$. The probability of finding a desired $A$ was around 0.003\%. Among all $A$'s such that $AJ(x)$ is a $P-$matrix for all $x \in \rR$, the bigest injectivity constant obtained was $\mu = 0.0270$ when 
\begin{align}\label{QuantitativeA4notP4x4}
A=
\begin{bmatrix}
    \phantom{-}0.0543  &  -0.3339  &  \phantom{-}1.2065  & -0.7603\\
    \phantom{-}1.0426  &  -1.0288 &   -0.0554  &  \phantom{-}0.6331\\
    -0.3425  &  \phantom{-}1.0820  &  \phantom{-}0.1201  & -0.5054\\
    \phantom{-}0.6584  &  \phantom{-}0.9198  & -0.0548   & \phantom{-}0.9759
\end{bmatrix}.
\end{align}
The extremal values of principal minors of both $J(x)$ and $AJ(x)$ in $\rR$ are listed in table \ref{tbl: minors4notPJacobian4x4}.
\begin{table}[h]
\begin{tabular}{|c|c|c|c|c|}
\hline
\multicolumn{5}{|c|}{tp = {(}60, 80, 100, 120{)}} \\ \hline
\multirow{2}{*}{\begin{tabular}[c]{@{}c@{}}minor\\ assoc. to \end{tabular}} & \multicolumn{2}{c|}{minors of $J$} & \multicolumn{2}{c|}{minors of $AJ$} \\ \cline{2-5} 
& min              & max             & min              & max              \\ \hline
$\O$    & 0.0052 &  0.0107 & 0.0052 &  0.0107    \\ \hline
$\{1\}$ & 0.0058 &  0.0146  & 0.0595 &  0.1166    \\ \hline
$\{2\}$ & -0.2241 & 0.1444 & 0.0281 & 0.0557      \\ \hline
$\{3\}$ &-0.0542 &  -0.0035 &  0.0113 &  0.0287    \\ \hline
$\{4\}$ & 1.0908 &  1.6929 &   0.1005 &  0.1981   \\ \hline
$\{1,2\}$ & 0.0569 & 0.1905  & 0.4490 &  0.5897    \\ \hline
$\{1,3\}$ & 0.0472 &  0.1171  &  0.0783 &  0.1839    \\ \hline
$\{1,4\}$ & 0.4303 &  2.8587 &  1.2697 &  2.5032      \\ \hline
$\{2,3\}$ & -0.1055 &  0.4404 &   0.0767 &  0.1398    \\ \hline
$\{2,4\}$ & -16.547 & 22.886 &  0.4490 &  0.8921    \\ \hline
$\{3,4\}$ & -3.4178 &  -2.1027 &   0.2444 &  0.5642   \\ \hline
$\{2,3,4\}$ & 7.7774 &  15.765 &  1.4104 &  2.3715      \\ \hline
$\{1,3,4\}$ & 1.1876 &  2.4306 &  0.9162 &  1.8671   \\ \hline
$\{1,2,4\}$ & 9.1485 &  12.474 &  4.5312 &  5.5645    \\ \hline
$\{1,2,3\}$ & 0.2197 &  0.2951 &   0.6351 &  0.8954   \\ \hline
\end{tabular}
\vspace{1em}
\caption{Minimum and maximum values of the minors of $J$ and $AJ$ attained in the rectangle $\rR$ for $tp = (60, 80, 100, 120)$ and $A$ given as in \eqref{QuantitativeA4notP4x4}.}
\label{tbl: minors4notPJacobian4x4}
\end{table}
 \subsubsection{\textbf{Transforming a barely $\bm P-$function, into a quantitative $\bm P-$function}}
For instance, the choice $tp = (40, 50, 80, 140)$, leads to a $P-$function $I$ with injectivity constant equal to 0.0003 (see table \ref{tbl: minors4barelyPJacobian4x4}). In this case, the probability of finding an invertible linear transformation that maps $I$ into a $P-$function was around 0.1\%. The biggest injectivity constant was 0.0696, which was obtained by using a transformation with matrix
\begin{align}\label{QuantitativeA4barelyP4x4}
A=
\begin{bmatrix}
     \phantom{-}0.6299  & -0.8295 &    \phantom{-}0.7703  & -0.6612\\
   -0.3839  &  \phantom{-}0.9404  &   \phantom{-}0.2405  & -0.9077\\
   -0.6677  &  \phantom{-}0.4124  &   \phantom{-}0.5842  & -0.0593\\
    \phantom{-}0.0376  &   \phantom{-}0.5939  &  \phantom{-}0.2307  &   \phantom{-}0.5767
\end{bmatrix}.
\end{align}

\begin{table}[h]
\begin{tabular}{|c|c|c|c|c|}
\hline
\multicolumn{5}{|c|}{tp = {(}40, 50, 80, 140{)}} \\ \hline
\multirow{2}{*}{\begin{tabular}[c]{@{}c@{}}minor\\ assoc. to \end{tabular}} & \multicolumn{2}{c|}{minors of $J$} & \multicolumn{2}{c|}{minors of $AJ$} \\ \cline{2-5} 
& min              & max             & min              & max              \\ \hline
$\O$    & 0.2798 &  0.7948 &  0.2798 &  0.7948  \\ \hline
$\{1\}$ & 0.2968 &  0.4538  &  0.7059 &  1.0728      \\ \hline
$\{2\}$ &  1.4425 &  2.1837 &  1.0680 &   1.4188     \\ \hline
$\{3\}$ & 0.2331 &  0.3714 &   0.1778 &  0.2355   \\ \hline
$\{4\}$ & 14.060 &  26.853 &   2.4921 &  4.7378   \\ \hline
$\{1,2\}$ & 0.1342 &  0.5759  &  1.0676 &   1.8373   \\ \hline
$\{1,3\}$ & 0.1706 &  0.3923 &  0.1039 &  0.2877     \\ \hline
$\{1,4\}$ &6.9883 &  15.7200 &  3.3856 &  4.2907    \\ \hline
$\{2,3\}$ &0.4822 &  1.6467 &   0.4741 &   0.8499    \\ \hline
$\{2,4\}$ &61.294 & 128.43 &   7.1367 &   12.656  \\ \hline
$\{3,4\}$ & 0.0101 &  0.4843 &   0.6085 &  0.8669     \\ \hline
$\{2,3,4\}$ & 12.681  &  25.310 &  1.8299  &  3.8828      \\ \hline
$\{1,3,4\}$ & 1.9870 &  4.0498 & 0.3587 &  1.0430    \\ \hline
$\{1,2,4\}$ & 10.955  & 14.123 &  2.5013  &   5.1622    \\ \hline
$\{1,2,3\}$ & 0.2100 &   0.2763 &   0.3938  &   0.5663   \\ \hline
\end{tabular}
\vspace{1em}
\caption{Minimum and maximum values of the minors of $J$ and $AJ$ attained in the rectangle $\rR$ for $tp = (40, 50, 80, 140)$ and $A$ given as in \eqref{QuantitativeA4barelyP4x4}.}
\label{tbl: minors4barelyPJacobian4x4}
\end{table}

 \subsubsection{\textbf{Testing for cases where the Jacobian is a $P-$matrix/ positive quasi-definite/ diagonally dominant everywhere}}
For randomly chosen $tp$ values that are in increasing order, the probability of finding an everywhere $P-$matrix Jacobian was around 4\%. As expected, there was no case of the Jacobian being diagonally dominant or positive quasi-definite everywhere.
\section{Conclusions}
In this paper, we addressed the uniqueness problem in ME-CT by focusing on the nonlinear part of the forward model which maps x-ray transform of material densities to energy-weighted integrals corresponding to different x-ray source energy spectra. 
We proved that the dual-energy CT transform is globally injective on a rectangle provided that the Jacobian determinant is nonvanishing everywhere.
 We presented a sufficient criteria for global injectivity of ME-CT transform using the theory of $P-$functions.
We derived global stability results for ME-CT in the determined as well as overdetermined (with more source energy spectra than the number of materials) cases by introducing the notion of quantitative $P-$function.

Our numerical simulations, which use realistic models of source energy spectra, demonstrated that dual-energy CT problem for (bone, water) material pair is globally injective as long as the tube potentials are different. Nevertheless, for (iodine, water) pair, we encountered a vanishing Jacobian determinant with probability 22\%. Moreover, for (bone, water) material pair, the probability of finding a positive quasi-definite Jacobian matrix was 6\%. However, for (iodine, water) material pair, it was not possible to find tube potentials that lead to everywhere diagonally dominant or positive quasi-definite Jacobian even for lower density of iodine.

 For the case of ME-CT, in all the examples we considered, where the Jacobian determinant remains positive throughout the domain, we observed that it is always possible to find a linear transformation that maps ME-CT transform into a $P-$function, which implies that ME-CT is globally injective as long as it is locally injective at least for the examples considered. However, no tube potentials led to a positive quasi-definite or diagonally dominant Jacobian matrix.

\section{Acknowledgements}
The authors thank Emil Sidky for useful discussions and references. The work of G. Bal was supported in part by NSF Grant DMS-1908736 and ONR Grant N00014-17-1-2096.

\bibliographystyle{siam}
\bibliography{References}
\end{document}